\let\NAT@parse\undefined
\theoremstyle{plain}
\newtheorem{theorem}{Theorem}
\newtheorem{lemma}[]{Lemma}
\theoremstyle{definition}
\newtheorem{remark}[]{Remark}
\newtheorem{assumption}[]{Assumption}
\newcommand{\bmat}{\begin{bmatrix}}
\newcommand{\emat}{\end{bmatrix}}
\newcommand{\dpartial}[2]{\frac{\partial{#1}}{\partial{#2}}}
\newcommand{\dpartialtfrac}[2]{\tfrac{\partial{#1}}{\partial{#2}}}
\newcommand{\dtotaltfrac}[2]{\tfrac{\mrm{d} #1}{\mrm{d} #2}}
\newcommand{\argmin}{\arg\min}
\DeclareMathOperator{\EXV}{\mathbb{E}}		
\newcommand{\R}{\mathbb{R}}
\newcommand{\bigO}{\mathcal{O}}
\newcommand{\calW}{\mathcal{W}}
\newcommand{\defeq}{:=}
\newcommand{\eye}{I}  
\newcommand{\mrm}{\mathrm}
\newcommand{\ce}{\mrm{cec}}
\newcommand{\traj}[1]{ \check{#1} }
\newcommand{\xtraj}{\traj{x}}
\newcommand{\utraj}{\traj{u}}
\newcommand{\wtraj}{\traj{w}}
\newcommand{\poltraj}{\traj{\pi}}
\newcommand{\tree}{\rotatebox{90}{$\scriptstyle\pitchfork$}}
\newcommand{\xtree}{x_{\tree}}
\newcommand{\utree}{u_{\tree}}
\newcommand\copyrighttext{%
  \footnotesize \textcopyright 2024 IEEE. Personal use of this material is permitted.
  Permission from IEEE must be obtained for all other uses, in any current or future
  media, including reprinting/republishing this material for advertising or promotional
  purposes, creating new collective works, for resale or redistribution to servers or
  lists, or reuse of any copyrighted component of this work in other works.
  DOI: \href{https://ieeexplore.ieee.org/document/10518123}{10.1109/LCSYS.2024.3396611}}
\newcommand\copyrightnotice{%
\begin{tikzpicture}[remember picture,overlay]
\node[anchor=south,yshift=5pt] at (current page.south) {\fbox{\parbox{\dimexpr\textwidth-\fboxsep-\fboxrule\relax}{\copyrighttext}}};
\end{tikzpicture}%
}
\title{\LARGE \bf
	Fourth-order suboptimality of nominal model predictive control in the presence of uncertainty
}
\author{
	Florian Messerer$^1$, Katrin Baumg\"artner$^1$, Sergio Lucia$^2$, Moritz Diehl$^{1,3}$
	\thanks{
		$^1$Department of Microsystems Engineering (IMTEK), University of Freiburg, 79110 Freiburg, Germany
		(\{first.last\}@imtek.uni-freiburg.de)
		}
	\thanks{
		$^2$Chair of Process Automation Systems, TU Dortmund University, 44227 Dortmund, Germany ({sergio.lucia@tu-dortmund.de})
	}
	\thanks{
		$^3$Department of Mathematics, University of Freiburg, 79104 Freiburg, Germany
			}
	\thanks{
		This research was supported by DFG via projects 423857295, 424107692, 525018088 and Research Unit FOR 2401, by BMWK via 03EI4057A and 03EN3054B, and by the EU via ELO-X 953348.
		}
	}
\begin{document}

\maketitle
\thispagestyle{empty}
\pagestyle{empty}
\copyrightnotice

\begin{abstract}           
	We investigate the suboptimality resulting from the application of nominal model predictive control (MPC) to a nonlinear discrete time stochastic system.
	The suboptimality is defined with respect to the corresponding stochastic optimal control problem (OCP) that minimizes the expected cost of the closed loop system.
	In this context, nominal MPC corresponds to a form of certainty-equivalent control (CEC).
	We prove that, in a smooth and unconstrained setting, the suboptimality growth is of fourth order with respect to the level of uncertainty, a parameter which we can think of as a standard deviation.
	This implies that the suboptimality does not grow very quickly as the level of uncertainty is increased, providing further insight into the practical success of nominal MPC.
	Similarly, the difference between the optimal and suboptimal control inputs is of second order.
	We illustrate the result on a simple numerical example, which we also use to show how the proven relationship may cease to hold in the presence of state constraints.
\end{abstract}

\begin{IEEEkeywords}
	Optimization,
	predictive control for nonlinear systems,
	stochastic optimal control.
\end{IEEEkeywords}


\section{Introduction}
\IEEEPARstart{M}{odel} predictive control (MPC)  is a control scheme that, given the current system state, computes control inputs by optimizing the predicted system trajectory \cite{Rawlings2017}. Usually this takes the form of solving online an optimal control problem (OCP).
In its most common form, known as nominal MPC, the predictions are treated as certain even though usually they are associated with significant uncertainty.
In contrast, stochastic MPC explicitly considers uncertainty and optimizes over probability distributions of trajectories.
With respect to the corresponding stochastic OCP, nominal MPC can be considered as a form of suboptimal control \cite{Bertsekas2005a}.
Nonetheless, nominal MPC often yields powerful controllers in practice \cite{Qin2003}, and there is a wide range of theoretical results that support this observation, as outlined in the following.


A fundamental theorem from linear control theory is the certainty-equivalence principle \cite{Anderson1990}, which holds for linear systems with quadratic cost and independent noise.
In this special case, the control policy resulting from the nominal problem is optimal also for the stochastic problem.
In line with this result, applying the nominally optimal policy to a stochastic system is often referred to as certainty-equivalent control (CEC), also for nonlinear systems.
This is different from simply applying a nominally optimal sequence of fixed control inputs, because the policy reacts to disturbances.
However, in the general nonlinear case, CEC is suboptimal.

Results from control theory are often not explicitly concerned with suboptimality as defined above, but with a similar motivation investigate the stability of nominal MPC under the presence of perturbations \cite{Scokaert1997,Limon2009, McAllister2022a}, referred to as inherent robustness.
This covers also suboptimal solutions of the nominal OCP \cite{Pannocchia2011, Allan2017}, and limitations present in the nonlinear setting \cite{Grimm2004}.

Taking a step back, the solution to a discrete time stochastic OCP can be expressed via dynamic programming (DP) \cite{Bellman1957}, but the computation of the solution is in general intractable.
Arguably, whole fields of study are dedicated to a large extent to finding tractable approximations and analyzing their consequences.
Besides MPC, this includes approximate DP \cite{Bertsekas2005}, which shares major results also with reinforcement learning (RL) \cite{Sutton2020}.

An overview of the suboptimality resulting from various approximations in the context of DP is given in \cite{Bertsekas2005a}, including bounds on the suboptimality of CEC.
Suboptimality resulting from sampling of the probability distribution is treated in the stochastic programming literature in the context of the sample average approximation \cite{Shapiro2009}.
Other results cover the suboptimality resulting from the optimization over  parametrized policies \cite{Hadjiyiannis2011a}, and -- in a nominal setting -- bounds on the performance loss from finite horizon approximations \cite{Gruene2008,Li2023} of the infinite horizon problem as well as the
transient behavior of the suboptimality \cite{Karapetyan2023a}.

\subsection{Contribution and outline}
In this paper we investigate the dependence of the suboptimality of CEC on the level of uncertainty $\sigma$, a parameter that can be thought of as akin to a standard deviation of the process noise.
We prove that for smooth and unconstrained problems with finite horizon the suboptimality is of size $\bigO(\sigma^4)$.
Similarly, the difference of the control inputs 
is of size $\bigO(\sigma^2)$.
Under a different set of assumptions and a different line of proof, similar results have been obtained in \cite{Fleming1971} in a continuous time setting and in \cite{Mohamed2022} in an MPC context.

The paper is structured as follows.
In Section~\ref{sec:stochOCP} we define the stochastic OCP and discuss its suboptimal solution via CEC.
We then analyze the resulting suboptimality in Section~\ref{sec:horz_fin}.
In Section~\ref{sec:num} we illustrate the result with a numerical example, followed by a concluding Section~\ref{sec:conc}.

\subsection{Notation and preliminaries}

For two column vectors $x\in\R^n$, $y\in\R^m$, we denote their vertical concatenation by
$(x,y) \defeq [x^\top, y^\top]^\top$, i.e., $(x,y)\in\R^{n+m}$.
The identity matrix is denoted by $I$, with the dimension inferred from context.
The partial derivative with respect to a variable $x$ is denoted by $\dpartialtfrac{}{x}$ and means the derivative with respect to the explicit argument of a function.
The argument can be indicated by parentheses or subscript, e.g., 
$f_\sigma(x)$ has two arguments: $x$ and $\sigma$.
Gradients of functions are denoted by $\nabla_x f(x)\defeq \dpartialtfrac{}{x}f(x)^\top$.
Total derivatives also take into account dependencies of function arguments, and are denoted by $\dtotaltfrac{}{x}$.
For a multivariate scalar-valued function $c\colon\R^n\to \R$, $x\mapsto c(x)$ we denote by $\dpartial{^i}{x^i} c(x)\bullet x^i$, for $i=1,\dots,$ the $i$-th order tensor product resulting in a scalar, consistent with $\dpartial{^1}{x^1} c(x)\bullet x^1 = \nabla c(x)^\top x$ and $\dpartial{^2}{x^2} c(x)\bullet x^2 = x^\top \nabla^2 c(x) x$.

\section{Stochastic optimal control and suboptimality } \label{sec:stochOCP}
Consider the discrete time stochastic system
\begin{equation} \label{eq:stoch_sys}
	x^+ = f(x, u, w),
\end{equation}
with continuous state $x \in\R^{n_x}$, control $u\in \R^{n_u}$, and process noise $w \in \R^{n_w}$.
At each time step the process noise is independently distributed as $w\sim\calW$, with zero mean and unit variance.

\subsection{The stochastic optimal control problem}
We aim to optimize the system trajectory over the horizon $N$, with discrete time index $k=0,\dots,N$.
Denoting by
\begin{equation}
\begin{aligned}
		\tilde x_0(\utraj, \wtraj; x) &= x,\\
		\tilde x_{k+1}(\utraj, \wtraj; x) &= f(  \tilde x_{k}(\utraj, \wtraj; x), u_k, w_k),
\end{aligned}
\end{equation}
$k=0,\dots,N-1$, the forward simulation of the system from initial state $x$ under the control and noise trajectories $\utraj=(u_0, \dots, u_{N-1})$ resp. $\wtraj=(w_0,\dots, w_{N-1})$, the total cost incurred by this this trajectory is
\begin{equation}
\begin{multlined}
		J(\utraj, \wtraj; x) \defeq  \sum_{k=0}^{N-1} L(\tilde x_k(\utraj, \wtraj), u_k) +  E(\tilde x_k(\utraj, \wtraj; x)),
\end{multlined}
\end{equation}
with stage cost $L$ and terminal cost $E$.

We want to minimize the expected value of this cost for the \emph{closed loop} system, where each control $u_k$ is chosen only after the corresponding state $x_k$ is known, i.e., after the preceding disturbances have already been realized. 
This corresponds to the recursive optimization problem
\begin{equation} \label{eq:stochOCP}
\begin{multlined}
		V^\star_\sigma(x) =
		\min_{u_0} \EXV_{w_0} \min_{u_1} \EXV_{w_1} \dots \min_{u_{N-1}} \EXV_{w_{N-1}} J(\utraj, \sigma \wtraj;  x).
\end{multlined}
\end{equation}

Here, we introduced the parameter $\sigma \in\R$ which provides us with a convenient way of scaling the influence of the noise.
We refer to $\sigma$ as the level of uncertainty.
We can think about $\sigma$ as akin to a standard deviation, since the effectively applied noise $\sigma w$ has variance $\sigma^2 \eye$. However, we also allow negative values of $\sigma$.



\begin{remark}
	For clarity of presentation and for a more lightweight notation we consider \eqref{eq:stochOCP} only for the case of time-invariant dynamics and stage cost.
	However, the results straightforwardly extend to the time-variant setting.
\end{remark}

\subsection{Certainty-equivalent control and nominal MPC}


In this paper we are concerned with the suboptimality resulting from nominal MPC resp. CEC.
By setting $\sigma=0$ in \eqref{eq:stochOCP}, the resulting nominal OCP can be written as the nonlinear program (NLP)
\begin{mini!}
	{\xtraj, \utraj}
	{ \sum_{k=0}^{N-1}  L(x_k, u_k) +  E(x_N) }
	{\label{ocp:nominal}}
	{}
	\addConstraint{x_0}{= x}
	\addConstraint{x_{k+1}}{ = f(x_k, u_k, 0), \quad k=0,\dots,N-1}.
\end{mini!}
The idea of nominal MPC is to solve \eqref{ocp:nominal} for the current state $x$ and to apply the first element of the resulting optimal control vector, $u^\star_0$.
After observing the resulting state, the OCP \eqref{ocp:nominal} is again solved with this new initial state, either for a receding or a shrinking horizon.
In order to isolate the suboptimality resulting from solving the nominal problem from the suboptimality resulting from approximations of the horizon, we will assume a shrinking horizon. 
Since we solve \eqref{ocp:nominal} instead of \eqref{eq:stochOCP}, this leads to suboptimality, even if the control input is recomputed at every time step.

\subsection{Suboptimality of CEC}

In the following, we define the suboptimality of CEC with respect to \eqref{eq:stochOCP}.
As the recursive structure implies, we can conceptually solve \eqref{eq:stochOCP} via DP.
The optimal state value function $V^\star_{\sigma,k}$ and state-action value function $Q^\star_{\sigma,k}$ at time $k$ are recursively defined by
\begin{subequations} \label{eq:optval_finhorz}
	\begin{align}
		V^\star_{\sigma,N}(x) &= E(x),\\
		Q^\star_{\sigma,k}(x,u) &= L(x, u) +  \EXV_w\{V^\star_{\sigma,k+1}(f(x,u,\sigma w))\},\\
		V^\star_{\sigma,k}(x) &= \min_u  Q^\star_{\sigma,k}(x,u), 
		\quad k=N-1,\dots, 0, \label{eq:optval_finhorz_state}
	\end{align}
with associated optimal policy
\begin{equation}
	\pi^\star_{\sigma,k}(x) = \argmin_u Q^\star_{\sigma,k}(x,u), \quad k=0,\dots,N-1.
\end{equation}
\end{subequations}

Similarly, the value functions resulting from the evaluation of a given policy $\poltraj = (\pi_0, \dots, \pi_{N-1})$ are defined by
\begin{subequations} \label{eq:poleval_finhorz}
	\begin{align}
		V^\pi_{\sigma,N}(x) &= E(x),\\
		Q^\pi_{\sigma,k}(x,u) &= L(x, u) +  \EXV_w\{V^\pi_{\sigma,k+1}(f(x,u,\sigma w))\},\\
		V^\pi_{\sigma,k}(x) &= Q^\pi_{\sigma,k}(x,\pi_k(x)), \quad k=N-1,\dots, 0.  \label{eq:poleval_finhorz_state}
	\end{align}
\end{subequations}


While \eqref{eq:optval_finhorz} defines the solution to \eqref{eq:stochOCP}, for $\sigma\neq 0$ this will in general be intractable without approximations.
However, by setting $\sigma=0$ in \eqref{eq:optval_finhorz}, we obtain the DP recursion corresponding to the nominal OCP \eqref{ocp:nominal}.
The idea of CEC is to apply the policy obtained by solving \eqref{eq:optval_finhorz} for $\sigma=0$, even if the system actually follows dynamics with $\sigma \neq 0$.
We introduce the shorthands
\begin{subequations} \label{eq:poleval_finhorz_cec}
	\begin{alignat}{6}
		\pi^\ce_k(x) &\defeq \pi^\star_{0,k}(x), &&\quad k=0,\dots,N-1,\\
		Q^\ce_{\sigma,k}(x,u) &\defeq Q^{\pi^\ce}_{\sigma, k}(x,u), &&\quad k=0,\dots,N-1,\\
		V^\ce_{\sigma,k}(x) &\defeq V^{\pi^\ce}_{\sigma, k}(x),  &&\quad k=0,\dots,N,
	\end{alignat}
\end{subequations}
for this policy and its evaluation \eqref{eq:poleval_finhorz} on a system with uncertainty level $\sigma$.

The resulting suboptimality is defined as the difference of the optimal value function at $k=0$ and the value function resulting from the evaluation of $\poltraj^\ce$,
\begin{equation} \label{eq:subopt_finhorz}
	\Delta V_\sigma(x) \defeq V_{\sigma}^\ce(x) - V^\star_{\sigma}(x),
\end{equation}
where we dropped the time index, $V_\sigma^\ce(x) \defeq V_{\sigma,0}^\ce(x)$, and $V^\star_\sigma(x) = V^\star_{\sigma,0}(x)$ as in \eqref{eq:stochOCP}.
If the real system has $\sigma=0$, the suboptimality of CEC is trivially zero, and, intuitively, it will grow as $\sigma$ is increased.
Note that here, in order to isolate the effect due to CEC, we do not consider the effect of additional approximations, e.g., of the horizon.
An important limitation in practice is that in DP minimization is commonly understood in a global sense, whereas numerical methods for solving \eqref{ocp:nominal} can in general only guarantee local optimality \cite{Rawlings2017}.
However, since our focus is on the suboptimality of CEC, we do not address this further.

\section{Analysis of suboptimality}\label{sec:horz_fin}
We will now characterize in more detail how the suboptimality \eqref{eq:subopt_finhorz} depends on $\sigma$.
For the derivation of the results, it will be useful to consider the DP operator $T_\sigma$ associated with the recursion \eqref{eq:optval_finhorz}.
Given a value function $V\colon\R^{n_x} \to \R$, this operator defines the updated value function as
\begin{equation} \label{eq:DPoperator}
	T_\sigma[V](x) \defeq \min_u L(x,u) +  \EXV_w\{ V(f(x,u,\sigma w))\}.
\end{equation}
This allows us to write the optimal value function at stage $k$ as the $(N-k)$-fold composition of the DP operator applied to the terminal cost,
$
V_{\sigma,k} = (T_\sigma)^{N-k} [E].
$

It will also be useful to decompose \eqref{eq:DPoperator} into suboperations,
		\begin{align}
			T_\sigma^{V\to Q}[V](x,u) &\defeq  L(x,u) +  \EXV_w \{ V(f(x,u,\sigma w))\}, \\
			T_\sigma^{V\to \pi}[V](x) &\defeq\argmin_u\;T_\sigma^{V\to Q}[V](x,u),\\
			T_\sigma^{Q\times\pi \to V}[Q,\pi](x) & \defeq Q(x, \pi(x)), \label{eq:DPop_qpi_to_V}
		\end{align}
such that
$
	T_\sigma[V] = T_\sigma^{Q\times \pi \to V}[T_\sigma^{V\to Q}[V],T_\sigma^{V\to \pi}[V]].
$

Similarly, we define the DP policy evaluation operator as
\begin{align} \label{eq:DPoperator_pol}
	\tilde T_\sigma[V, \pi](x)
	\defeq
	T_\sigma^{Q\times \pi \to V}[T_\sigma^{V\to Q}[V],\pi](x).
\end{align}

Our main result will be based on a Taylor expansion of the suboptimality \eqref{eq:subopt_finhorz} with respect to $\sigma$.
For this purpose, we first establish several lemmata on how the DP operators defined above preserve derivative related properties.
The assumptions we need for this are mostly technical and ensure that all necessary derivatives with respect to $\sigma$ exist.
The zero mean and unit variance assumption on the noise will simplify some arguments, but is without loss of generality, as we can always correspondingly adapt the definition of the dynamics $f$ via incorporation of affine transformations.

	\begin{assumption}[Smoothness]\label{ass:smoothfuncs}
	The dynamics function $f$, the stage cost function $L$ and the terminal cost $E$ are smooth with respect to all arguments, $f, L, E\in C^\infty$.
	\end{assumption}
	\begin{assumption}[Noise distribution]\label{ass:noise}
	At each time point, the noise $w$ independently follows the probability distribution~$\calW$.
	This distribution has zero mean, $\EXV_{w\sim \calW} \{w\} = 0$,
	unit variance, $\EXV_{w\sim \calW} \{ww^\top\} = \eye$,
	and is supported only on a compact set $W\subset\R^{n_w}$.
	\end{assumption}
	\begin{remark}[Support of the noise distribution] \label{rem:finite_tail}
		We introduce the bounded noise support to ensure that all expectations are finite and that we can compute their derivatives.
		In principle, the results could also be derived for distributions with sufficiently fast decaying tails, e.g., normal distributions.
		This would require some additional assumptions on the considered functions to ensure they do not counteract the tail decay.
		In more detail, they need to be Lebesgue-integrable with respect to the measure space corresponding to $\calW$, cf., e.g., \cite[Thm. 2.27]{Folland1999}.
	\end{remark}
	\begin{assumption}[Regularity]\label{ass:regularity_finhorz}
		For the considered value functions $V_\sigma$,
		the DP operator \eqref{eq:DPoperator} is associated with a
		unique minimizer $\pi_{\sigma}(x) = T_\sigma^{V\to\pi}[V_\sigma](x)$ for all $x$ and $\sigma \in [-\bar\sigma,\bar\sigma]$, for some $\bar\sigma\in\R_{++}$.
		At this solution, the second order sufficient condition holds, i.e., $\nabla_u^2 Q_{\sigma}(x, \pi_{\sigma}(x)) \succ 0$, where $Q_\sigma = T_\sigma^{V\to Q}[V_\sigma]$ is the associated state-action value function.
		More specifically, we assume this to hold at every step of the DP recursion \eqref{eq:optval_finhorz}.
	\end{assumption}

	\begin{lemma}\label{lem:DPeval_op_smooth}
		Let Assumptions~\ref{ass:smoothfuncs} to \ref{ass:regularity_finhorz} hold.
		The operators $T_\sigma^{V\to Q}$, $T_\sigma^{V\to\pi}$, $T_\sigma^{Q\times \pi \to V}$, $T_\sigma$,   $\tilde T_\sigma$  as defined in \eqref{eq:DPoperator} to \eqref{eq:DPoperator_pol} preserve the smoothness of the respective functions with respect to all arguments.
	\end{lemma}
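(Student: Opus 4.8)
The plan is to establish smoothness for each of the three elementary suboperators in \eqref{eq:DPoperator_sub} separately, and then obtain the claims for the composite operators $T_\sigma$ and $\tilde T_\sigma$ for free: by their defining relations below \eqref{eq:DPoperator_sub} and in \eqref{eq:DPoperator_pol} they are finite compositions of the three building blocks, and a composition of $C^\infty$ maps is again $C^\infty$. So the only real work lies in the three elementary operators.

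First I would treat $T_\sigma^{V\to Q}$. The integrand $(x,u,\sigma,w)\mapsto V(f(x,u,\sigma w))$ is a composition of the smooth functions $V$ and $f$ (Assumption~\ref{ass:smoothfuncs}) with the smooth scaling $(\sigma,w)\mapsto\sigma w$, hence $C^\infty$ jointly in all of its arguments. The key step is to move partial derivatives of arbitrary order in $(x,u,\sigma)$ inside the expectation $\EXV_w$. Here the compact support of $\calW$ (Assumption~\ref{ass:noise}, see also Remark~\ref{rem:finite_tail}) does the heavy lifting: on any compact neighborhood of a point $(x,u,\sigma)$ the product with the compact support $W$ is again compact, so every partial derivative of the integrand is continuous and therefore uniformly bounded there. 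The standard theorem on differentiation under the integral sign then applies inductively, showing that $\EXV_w\{V(f(x,u,\sigma w))\}$ is $C^\infty$ in $(x,u,\sigma)$; adding the smooth stage cost $L$ preserves this.

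Next I would handle the $\argmin$ operator $T_\sigma^{V\to\pi}$, which I expect to be the main obstacle. By the previous step $Q_\sigma \defeq T_\sigma^{V\to Q}[V]$ is smooth, so its stationarity map $(x,u,\sigma)\mapsto\nabla_u Q_\sigma(x,u)$ is smooth as well. Assumption~\ref{ass:regularity_finhorz} guarantees a unique minimizer $\pi_\sigma(x)$ at which $\nabla_u Q_\sigma(x,\pi_\sigma(x))=0$ and $\nabla_u^2 Q_\sigma(x,\pi_\sigma(x))\succ 0$. The latter is precisely the invertibility of the Jacobian of the stationarity map with respect to $u$, so the implicit function theorem yields a locally unique smooth solution $u^\star(x,\sigma)$; by the assumed uniqueness of the minimizer this local branch must coincide with $\pi_\sigma(x)$, which is therefore $C^\infty$ in $(x,\sigma)$.

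Finally, $T_\sigma^{Q\times\pi\to V}[Q,\pi](x)=Q(x,\pi(x))$ is manifestly smooth whenever $Q$ and $\pi$ are, as a composition, and carries no explicit $\sigma$-dependence of its own. Combining the three blocks through the composition identities then gives smoothness of $T_\sigma$ and $\tilde T_\sigma$, completing the argument. The two delicate points throughout are the justification of differentiation under the expectation, which is resolved by the compact support, and the appeal to the second-order sufficient condition in order to invoke the implicit function theorem for the $\argmin$ step, which is resolved by Assumption~\ref{ass:regularity_finhorz}.
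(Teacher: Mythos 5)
Your proposal is correct and follows essentially the same route as the paper's proof: smoothness of $T_\sigma^{V\to Q}$ via differentiation under the expectation justified by the compact noise support (the paper invokes the measure-theoretic Leibniz rule of Folland for exactly this), smoothness of $T_\sigma^{V\to\pi}$ via the implicit function theorem applied to the stationarity condition $\nabla_u Q_\sigma(x,\pi_\sigma(x))=0$ under Assumption~\ref{ass:regularity_finhorz}, triviality of $T_\sigma^{Q\times\pi\to V}$, and the composite operators by composition. Your additional remarks --- the inductive bounding argument on compact neighborhoods and the identification of the local IFT branch with the globally unique minimizer --- are finer details of the same argument, not a different approach.
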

	\begin{proof}
		Let $\bar V_\sigma$ be a smooth value function and denote $Q_\sigma = T_\sigma^{V\to Q}[\bar V_\sigma]$, 
		$\pi_\sigma = T_\sigma^{V\to\pi}[\bar V_\sigma]$, $V_\sigma = T_\sigma[\bar V_\sigma] = \tilde T_\sigma[\bar V_\sigma, \pi_\sigma]$.
		Then
		\begin{subequations}
			\begin{align}
				Q_\sigma(x, u) &= L(x,u) + \EXV_w \{ \bar V_\sigma(f(x,u,\sigma w))\}, \\
				\label{eq:pol_greedy}
				\pi_\sigma(x) &=  \argmin_u Q_\sigma(x,u),\\
				V_\sigma(x) &= Q_\sigma(x,\pi_\sigma(x)). \label{eq:valfunc_smoothnesslemma}				
			\end{align}
		\end{subequations}
		Smoothness of $Q_\sigma$ follows from smoothness of $\bar V_\sigma $, $L$, $f$, (Assumption~\ref{ass:smoothfuncs}) and the measure theoretic statement of the Leibniz integral rule \cite[Thm. 2.27]{Folland1999}, which can be applied since the bounded noise support (Assumption~\ref{ass:noise}) in combination with smoothness ensures boundedness of both the expectation and the expectation of the derivatives.
		Thus, $T_\sigma^{V\to Q}$ preserves smoothness.
		Due to \eqref{eq:pol_greedy}, the policy $\pi_\sigma$ is implicitly defined via
		$
			\nabla_u Q_\sigma(x, \pi_\sigma(x)) = 0.
		$
		Then smoothness of $\pi_\sigma$ follows from  the implicit function theorem, which can be applied due to smoothness of $Q_\sigma$ and the regularity assumption (Assumption~\ref{ass:regularity_finhorz}).
		Thus, $T_\sigma^{V\to\pi}$ preserves smoothness.
		Further, $T_\sigma^{Q\times \pi \to V}$ trivially preserves smoothness, cf. \eqref{eq:valfunc_smoothnesslemma}.
		Since $T_\sigma$ and $\tilde T_\sigma$ are compositions of the preceding three operators, the property transfers.  
	\end{proof}

	Having established the technical necessities, we will now derive the main result.
	For this purpose we first establish several consequences of applying the DP operator $T_\sigma$ to a value function $\bar V_\sigma$ of which the first-order derivative with respect to $\sigma$ is zero at zero, i.e., $\dpartialtfrac{}{\sigma}\bar V_\sigma(x)\vert_{\sigma=0}$ for all $x$.
	This is trivially true for the terminal cost $E$ since it does not depend on $\sigma$. In Lemma~\ref{lem:DPoperator_preservation} we show that this property is preserved by the DP operator.
	In Lemma~\ref{lem:DPoperator_pol_der} we show that also the corresponding derivative of the resulting policy $\pi_\sigma$ is zero at zero.
	In Lemma~\ref{lem:DPoperator_Valfunc_der} and \ref{lem:DPoperator_state_action_valfunc_der} we derive further results on the partial derivatives of the value functions.

	\begin{lemma}\label{lem:DPoperator_preservation}
	Let Assumptions~\ref{ass:smoothfuncs} to  \ref{ass:regularity_finhorz} hold.
	Let $\bar V_\sigma$ be a smooth value function for which  $\dpartial{}{\sigma} \bar V_\sigma(x)\vert_{\sigma=0} = 0$ for all $x$.
	Then this property is preserved by the DP operator $T_\sigma$, i.e.,
	also for the updated value function $V_\sigma = T_\sigma[\bar V_\sigma]$ it holds that
	$	
			\dpartialtfrac{}{\sigma} V_\sigma(x)\vert_{\sigma=0} = 0.
	$
	\end{lemma}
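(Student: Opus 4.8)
The plan is to apply the envelope theorem to the representation $V_\sigma(x) = Q_\sigma(x, \pi_\sigma(x))$ established in the proof of Lemma~\ref{lem:DPeval_op_smooth}, with $Q_\sigma = T_\sigma^{V\to Q}[\bar V_\sigma]$ and $\pi_\sigma = T_\sigma^{V\to\pi}[\bar V_\sigma]$. By Lemma~\ref{lem:DPeval_op_smooth} both $Q_\sigma$ and $\pi_\sigma$ are smooth, so the composition is differentiable in $\sigma$ and I may differentiate through it. Since $\pi_\sigma(x)$ is the unique minimizer of $Q_\sigma(x,\cdot)$ (Assumption~\ref{ass:regularity_finhorz}), the first-order condition $\nabla_u Q_\sigma(x,\pi_\sigma(x)) = 0$ holds, so the contribution arising from the $\sigma$-dependence of the minimizer drops out and
\[
	\dpartial{}{\sigma} V_\sigma(x) = \dpartial{}{\sigma} Q_\sigma(x,u)\Big|_{u = \pi_\sigma(x)}.
\]

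Next I would compute the partial derivative of $Q_\sigma(x,u) = L(x,u) + \EXV_w\{\bar V_\sigma(f(x,u,\sigma w))\}$ with respect to the parameter $\sigma$ at fixed $(x,u)$. Interchanging differentiation and expectation is justified by the Leibniz rule exactly as in Lemma~\ref{lem:DPeval_op_smooth} (smoothness together with bounded support, Assumptions~\ref{ass:smoothfuncs} and~\ref{ass:noise}). The stage cost $L$ carries no $\sigma$-dependence, so differentiating under the expectation and applying the chain rule to the composite $\bar V_\sigma(f(x,u,\sigma w))$ produces two contributions: one from the explicit $\sigma$-argument of $\bar V_\sigma$, and one from the explicit $\sigma$ in the third slot $\sigma w$ of the dynamics, the latter carrying a factor $w$ through $\dpartialtfrac{}{\sigma} f(x,u,\sigma w) = \dpartialtfrac{f}{w}(x,u,\sigma w)\, w$.

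The last step is to evaluate at $\sigma = 0$ and show both contributions vanish. In the first term the inner factor equals $\dpartialtfrac{}{\sigma}\bar V_\sigma(y)\vert_{\sigma=0}$ evaluated at the (now $\sigma$-independent) point $y = f(x,u,0)$, which is zero for every $y$ by the hypothesis on $\bar V_\sigma$. In the second term, setting $\sigma = 0$ renders the argument $f(x,u,0)$ and the Jacobian $\dpartialtfrac{f}{w}(x,u,0)$ independent of $w$, so the only remaining $w$-dependence is the explicit factor $w$; pulling the resulting deterministic prefactor out of the expectation leaves $\EXV_w\{w\} = 0$ by the zero-mean assumption (Assumption~\ref{ass:noise}). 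Hence $\dpartialtfrac{}{\sigma} Q_\sigma(x,u)\vert_{\sigma=0} = 0$ for all $(x,u)$, and evaluating at $u = \pi_0(x)$ yields the claim.

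I expect the only real subtlety to be the bookkeeping of the two distinct roles of $\sigma$ in $Q_\sigma$ — the implicit one inherited from $\bar V_\sigma$ and the explicit one in the noise scaling — and the correct application of the envelope argument so that the $\sigma$-derivative of the minimizer $\pi_\sigma$ need not be controlled at all. The interchange of expectation and differentiation is not an obstacle here, as it is already covered by the machinery of Lemma~\ref{lem:DPeval_op_smooth}.
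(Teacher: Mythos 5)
Your proposal is correct and follows essentially the same route as the paper's own proof: the envelope argument (via $\nabla_u Q_\sigma(x,\pi_\sigma(x))=0$) to discard the minimizer's $\sigma$-dependence, the Leibniz rule to differentiate under the expectation, and the chain-rule split into the implicit $\sigma$-dependence of $\bar V_\sigma$ (killed by the hypothesis) and the explicit noise-scaling term (killed by $\EXV_w\{w\}=0$ once the prefactor becomes $w$-independent at $\sigma=0$). No gaps.
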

	\begin{proof}
		The updated value function is given by
			$V_\sigma(x) = Q_\sigma(x,\pi_\sigma(x))$,
		where $Q_\sigma = T_\sigma^{V\to Q}[\bar V]$, $\pi_\sigma = T_\sigma^{V\to \pi}[\bar V]$.
		The derivative is
	\begin{subequations}  \label{eq:valfunc_first_der}
		\begin{align}
			&\dpartialtfrac{}{\sigma }V_\sigma(x) =
			\dtotaltfrac{}{\sigma} Q_\sigma(x, \pi_\sigma(x)) \\&
			=
			\dpartialtfrac{}{\sigma } Q_\sigma(x, \pi_\sigma(x)) + \dpartialtfrac{}{\sigma } \pi_\sigma(x)^\top \nabla_u Q_\sigma(x, \pi_\sigma(x)) \\&
			= \dpartialtfrac{}{\sigma } Q_\sigma(x, \pi_\sigma(x)),
		\end{align}
	\end{subequations}
	where $\nabla_u Q_\sigma(x, \pi_\sigma(x)) = 0$ due to the definition of $\pi_\sigma$.
		Further,
		\begin{subequations}
			\begin{align}
				&\dpartialtfrac{}{\sigma }Q_\sigma(x,u) =
				\dtotaltfrac{}{\sigma}\EXV_w \{ \bar V_\sigma(f(x,u,\sigma w) ) \} \\
				\label{eq:first_der_Qfunc}
				&\; = \EXV_w \{ \dpartialtfrac{}{\sigma} \bar V_\sigma(f(x,u,\sigma w)  \\ \nonumber
				&\qquad\qquad+ w^\top \nabla_w f(x,u,\sigma w) \nabla_x \bar V_\sigma(f(x,u,\sigma w) \},
			\end{align}
		\end{subequations}
		where the derivative and expectation operator can be swapped due to the measure theoretic Leibniz integral rule \cite[Thm. 2.27]{Folland1999}, which applies due to the bounded noise support and smoothness of $\bar V_\sigma$.
		When evaluating at $\sigma=0$, the first term in \eqref{eq:first_der_Qfunc} drops due to the assumption on $\bar V_\sigma$, while the second term drops due to the zero mean of $w$.
		Then
		$
			\dpartialtfrac{}{\sigma }V_\sigma(x) \Big\vert_{\sigma=0}
			 = \dpartialtfrac{}{\sigma }Q_\sigma(x,u) \Big\vert_{\sigma=0} = 0
		$
		follows.
		\end{proof}

		\begin{lemma}\label{lem:DPoperator_pol_der}
			Let Assumptions~\ref{ass:smoothfuncs} to  \ref{ass:regularity_finhorz} hold.
			Let $\bar V_\sigma$ be a smooth value function for which  $\dpartial{}{\sigma} \bar V_\sigma(x)\vert_{\sigma=0} = 0$ for all $x$.
			Denote by $\pi_\sigma = T^{V\to\pi}[\bar V_\sigma]$ the  policy associated with the DP operator applied to  $\bar V_\sigma$.
			Its first-order derivative with respect to $\sigma$ is zero at zero,
				$
				\dpartialtfrac{}{\sigma} \pi_\sigma(x) \vert_{\sigma = 0}= 0.
				$
		\end{lemma}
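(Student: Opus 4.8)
The plan is to characterize $\pi_\sigma$ through its first-order optimality condition and differentiate it implicitly. By the definition of $T_\sigma^{V\to\pi}$ together with Assumption~\ref{ass:regularity_finhorz}, the policy is the unique stationary point of $Q_\sigma = T_\sigma^{V\to Q}[\bar V_\sigma]$, i.e.,
\[
	\nabla_u Q_\sigma(x,\pi_\sigma(x)) = 0
\]
for all $x$ and all $\sigma\in[-\bar\sigma,\bar\sigma]$. Differentiating this identity totally with respect to $\sigma$ and applying the chain rule gives
\[
	\dpartialtfrac{}{\sigma}\nabla_u Q_\sigma(x,\pi_\sigma(x)) + \nabla_u^2 Q_\sigma(x,\pi_\sigma(x))\,\dpartialtfrac{}{\sigma}\pi_\sigma(x) = 0.
\]
Since $\nabla_u^2 Q_\sigma(x,\pi_\sigma(x))\succ 0$ is invertible by Assumption~\ref{ass:regularity_finhorz}, solving for the policy derivative yields
\[
	\dpartialtfrac{}{\sigma}\pi_\sigma(x) = -\big[\nabla_u^2 Q_\sigma(x,\pi_\sigma(x))\big]^{-1}\,\dpartialtfrac{}{\sigma}\nabla_u Q_\sigma(x,\pi_\sigma(x)).
\]
Hence the entire claim reduces to showing that the mixed derivative $\dpartialtfrac{}{\sigma}\nabla_u Q_\sigma(x,u)\big\vert_{\sigma=0}$ vanishes for every $(x,u)$, in particular at $u=\pi_0(x)$.

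To evaluate this mixed derivative I would reuse the expression for $\dpartialtfrac{}{\sigma}Q_\sigma$ already computed in the proof of Lemma~\ref{lem:DPoperator_preservation}, cf.\ \eqref{eq:first_der_Qfunc}, and exchange the order of differentiation, $\dpartialtfrac{}{\sigma}\nabla_u Q_\sigma = \nabla_u \dpartialtfrac{}{\sigma}Q_\sigma$, which is legitimate by smoothness (Lemma~\ref{lem:DPeval_op_smooth}) and the measure-theoretic Leibniz rule. The integrand in \eqref{eq:first_der_Qfunc} consists of two summands: an explicit term $\dpartialtfrac{}{\sigma}\bar V_\sigma(f(x,u,\sigma w))$, and a term linear in $w$ arising from the chain rule through the noise argument. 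Applying $\nabla_u$ and then evaluating at $\sigma=0$, I would show that each summand drops out.

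For the explicit term, $\nabla_u$ acts only through the argument $f$, so its contribution is proportional to $\nabla_x\big(\dpartialtfrac{}{\sigma}\bar V_\sigma\big)(f(x,u,0))$ at $\sigma=0$. Because the hypothesis $\dpartialtfrac{}{\sigma}\bar V_\sigma(x)\vert_{\sigma=0}=0$ holds for \emph{all} $x$, the map $x\mapsto \dpartialtfrac{}{\sigma}\bar V_\sigma(x)\vert_{\sigma=0}$ is identically zero, so its $x$-gradient vanishes and this term disappears. For the second term, the prefactor $w^\top$ remains a single linear power of $w$ after applying $\nabla_u$, since at $\sigma=0$ the remaining factors no longer depend on $w$ (the argument reduces to $f(x,u,0)$); the expectation over $w$ then annihilates it by the zero-mean assumption (Assumption~\ref{ass:noise}). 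Combining the two gives $\dpartialtfrac{}{\sigma}\nabla_u Q_\sigma(x,u)\vert_{\sigma=0}=0$, and substituting this into the formula for $\dpartialtfrac{}{\sigma}\pi_\sigma(x)$ above completes the argument.

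The main obstacle, as in the preceding lemma, is careful bookkeeping of the dual $\sigma$-dependence of $\bar V_\sigma$ — explicit through the subscript and implicit through the argument $f(x,u,\sigma w)$ — while commuting the operators $\nabla_u$, $\dpartialtfrac{}{\sigma}$, and $\EXV_w$. These interchanges are justified by smoothness and the bounded-support Leibniz rule, and one must invoke the two structurally different vanishing mechanisms for the two respective terms: the gradient of an identically-zero function for the explicit part, and the zero mean of $w$ for the chain-rule part. Everything else is a routine application of the implicit function theorem already used to establish smoothness of $\pi_\sigma$ in Lemma~\ref{lem:DPeval_op_smooth}.
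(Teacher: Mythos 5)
Your proposal is correct and takes essentially the same route as the paper: implicit differentiation of the stationarity condition $\nabla_u Q_\sigma(x,\pi_\sigma(x))=0$, reduction of the claim to $\dpartialtfrac{}{\sigma}\nabla_u Q_\sigma(x,u)\vert_{\sigma=0}=0$, and invertibility of $\nabla_u^2 Q_\sigma$ from Assumption~\ref{ass:regularity_finhorz}. The only difference is in the last step and is cosmetic: the paper obtains the vanishing mixed derivative in one line --- since $\dpartialtfrac{}{\sigma}Q_\sigma(x,u)\vert_{\sigma=0}=0$ holds for \emph{all} $(x,u)$ by \eqref{eq:first_der_Qfunc}, its $u$-gradient vanishes identically and the derivatives commute by smoothness --- whereas you re-expand the integrand of \eqref{eq:first_der_Qfunc} under $\nabla_u$ and verify the two terms vanish separately (gradient of an identically zero function, and zero mean of $w$), which is valid but amounts to re-deriving at the level of $\bar V_\sigma$ what the paper reads off directly at the level of $Q_\sigma$.
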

	\begin{proof}
		As before, $\pi_\sigma$ is implicitly defined from $\nabla_u Q_\sigma(x, \pi_\sigma(x)) = 0$.
		Via the implicit function theorem, its derivative is given by 
		\begin{equation*} 
			\dpartialtfrac{}{\sigma } \pi_\sigma(x) = - (\nabla_u^2 Q_\sigma(x, \pi_\sigma(x)))^{-1} \dpartialtfrac{}{\sigma} \nabla_u Q_\sigma(x, \pi_\sigma(x)).
		\end{equation*}
		From \eqref{eq:first_der_Qfunc} we have that $\dpartialtfrac{}{\sigma }Q_\sigma(x,u) \vert_{\sigma=0} = 0$ for all $x$, $u$.
		Thus, also
			$
			\nabla_u \dpartialtfrac{}{\sigma } Q_\sigma(x,u) \vert_{\sigma=0} =
			\dpartialtfrac{}{\sigma } \nabla_u Q_\sigma(x,u) \vert_{\sigma=0} = 0
		$
		holds for all $x$, $u$.
		Therefore, evaluating the policy derivative at $\sigma = 0$ yields 
		$\dpartialtfrac{}{\sigma} \pi_\sigma(x) \vert_{\sigma = 0}= 0$.
	\end{proof}
	\begin{lemma}\label{lem:DPoperator_Valfunc_der}
		Let Assumptions~\ref{ass:smoothfuncs} to  \ref{ass:regularity_finhorz} hold.
		Let $\bar V_\sigma$ be a smooth value function for which  $\dpartial{}{\sigma} \bar V_\sigma(x)\vert_{\sigma=0} = 0$ for all $x$.
		Denote by $V_\sigma = T_\sigma[\bar V_\sigma]$ the updated value function.
		At $\sigma=0$, the derivatives of $V_\sigma$ up to third-order are given by
			\begin{equation} \label{eq:val_func_deriv}
				\dpartialtfrac{^i}{\sigma^i }V_{\sigma}(x)\Big\vert_{\sigma=0} = \dpartialtfrac{^i}{\sigma^i }Q_{\sigma}(x, \pi_{\sigma}(x))\Big\vert_{\sigma=0},
			\end{equation}
		for $i=1,2,3$ and with $Q=T_\sigma^{V\to Q}[\bar V_\sigma]$, $\pi_\sigma = T^{V\to\pi}[\bar V_\sigma]$.
	\end{lemma}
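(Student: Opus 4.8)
The plan is to fix the state $x$ and collapse the statement to a one-dimensional chain-rule computation in $\sigma$. Writing $p(\sigma)\defeq\pi_\sigma(x)$, $q(\sigma,u)\defeq Q_\sigma(x,u)$, and $g(\sigma)\defeq V_\sigma(x)=q(\sigma,p(\sigma))$, the left-hand side of \eqref{eq:val_func_deriv} is $g^{(i)}(0)$ (an ordinary derivative, since $x$ is held fixed), while the right-hand side is, under the paper's convention, the pure partial $\dpartialtfrac{^i}{\sigma^i}q(\sigma,u)$ in the explicit $\sigma$ evaluated at $\sigma=0$, $u=p(0)$. Thus the entire content is to show that every mixed term generated by differentiating $g=q(\sigma,p(\sigma))$ through the chain rule vanishes at $\sigma=0$, for $i=1,2,3$.

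I would first assemble the three facts that kill these mixed terms. (i) Since $\pi_\sigma$ minimizes $q(\sigma,\cdot)$, the stationarity condition $\nabla_u q(\sigma,p(\sigma))=0$ holds for all $\sigma\in[-\bar\sigma,\bar\sigma]$ by Assumption~\ref{ass:regularity_finhorz}. (ii) Lemma~\ref{lem:DPoperator_pol_der} gives $p'(0)=\dpartialtfrac{}{\sigma}\pi_\sigma(x)\vert_{\sigma=0}=0$. (iii) Equation~\eqref{eq:first_der_Qfunc} shows that $\dpartialtfrac{}{\sigma}q(\sigma,u)\vert_{\sigma=0}=0$ holds as an identity in $u$, so differentiating in $u$ yields $\dpartialtfrac{}{\sigma}\nabla_u q(\sigma,u)\vert_{\sigma=0}=0$ for all $u$, as already recorded in the proof of Lemma~\ref{lem:DPoperator_pol_der}.

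I would then differentiate successively. For $i=1$, the chain rule gives $g'(\sigma)=\dpartialtfrac{}{\sigma}q(\sigma,p(\sigma))+p'(\sigma)^\top\nabla_u q(\sigma,p(\sigma))$, and the second summand vanishes identically by (i), so $g'(\sigma)=\dpartialtfrac{}{\sigma}q(\sigma,p(\sigma))$ for all $\sigma$ in the range. Differentiating this clean identity once more gives $g''(\sigma)=\dpartialtfrac{^2}{\sigma^2}q(\sigma,p(\sigma))+p'(\sigma)^\top\dpartialtfrac{}{\sigma}\nabla_u q(\sigma,p(\sigma))$, whose mixed term vanishes at $\sigma=0$ by (ii); this is the case $i=2$. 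A third differentiation produces $\dpartialtfrac{^3}{\sigma^3}q(0,p(0))$ together with several mixed terms: each term carrying a factor $p'(0)$ dies by (ii), and the single remaining term, proportional to $p''(0)^\top\dpartialtfrac{}{\sigma}\nabla_u q(0,p(0))$, dies by (iii). This yields \eqref{eq:val_func_deriv} for $i=3$.

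The main obstacle, and the reason the statement halts at third order, is precisely this last term: the factor $p'(0)=0$ does \emph{not} eliminate $p''(0)^\top\dpartialtfrac{}{\sigma}\nabla_u q(0,p(0))$, so fact (iii) is indispensable. At fourth order the analogous surviving contribution is proportional to $p''(0)^\top\dpartialtfrac{^2}{\sigma^2}\nabla_u q(0,p(0))$, and since $\dpartialtfrac{^2}{\sigma^2}q(0,\cdot)$ has no reason to vanish, this term persists---this is the mechanism underlying the fourth-order suboptimality. Beyond this observation, the proof is just careful bookkeeping of the multivariate chain rule together with the interpretation of the partial-derivative notation on the right-hand side of \eqref{eq:val_func_deriv}.
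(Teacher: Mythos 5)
Your proposal is correct and takes essentially the same route as the paper's own proof: successive differentiation of $V_\sigma(x)=Q_\sigma(x,\pi_\sigma(x))$ through the chain rule, with the mixed terms eliminated by exactly the same three facts --- stationarity $\nabla_u Q_\sigma(x,\pi_\sigma(x))=0$, the vanishing policy derivative $\dpartialtfrac{}{\sigma}\pi_\sigma(x)\vert_{\sigma=0}=0$ from Lemma~\ref{lem:DPoperator_pol_der}, and the identity $\dpartialtfrac{}{\sigma}\nabla_u Q_\sigma(x,u)\vert_{\sigma=0}=0$ recorded in that lemma's proof, which is what kills the $\dpartialtfrac{^2}{\sigma^2}\pi_\sigma(x)$ term at third order. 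Your closing remark on why the argument necessarily stops at third order (the surviving $\dpartialtfrac{^2}{\sigma^2}\pi_\sigma(x)^\top\dpartialtfrac{^2}{\sigma^2}\nabla_u Q_\sigma$ contribution at fourth order) is not in the paper but is consistent with, and nicely explains, the fourth-order suboptimality mechanism.
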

	\begin{proof}
		The first-order derivative, i.e., \eqref{eq:val_func_deriv} for $i=1$, is given in \eqref{eq:valfunc_first_der}.
		Continuing this derivation, we get
			\begin{align}
				&\dpartialtfrac{^2}{\sigma^2}V_\sigma(x) =
				\dtotaltfrac{}{\sigma} \dpartialtfrac{}{\sigma } Q_\sigma(x, \pi_\sigma(x))
				\\ \nonumber
				&\;= \dpartialtfrac{^2}{\sigma^2} Q_\sigma(x, \pi_\sigma(x)) +  \dpartialtfrac{}{\sigma } \pi_\sigma(x)^\top \dpartialtfrac{}{\sigma } \nabla_u Q_\sigma(x, \pi_\sigma(x)).
			\end{align}
	
	Evaluating at $\sigma=0$ and noting that the first derivative of $\pi_\sigma(x)$ is zero
	from Lemma~\ref{lem:DPoperator_pol_der} yields \eqref{eq:val_func_deriv} for $i=2$.
		The third-order derivative is
		\begin{subequations}
			\begin{alignat}{2}
				&\dpartialtfrac{^3}{\sigma^3}V_\sigma(x)
				\\ \nonumber
				&=
				\dtotaltfrac{}{\sigma} \left(\dpartialtfrac{^2}{\sigma^2} Q_\sigma(x, \pi_\sigma(x)) +  \dpartialtfrac{}{\sigma } \pi_\sigma(x)^\top \dpartialtfrac{}{\sigma } \nabla_u Q_\sigma(x, \pi_\sigma(x))\right)
				\\ \nonumber
				&= \dpartialtfrac{^3}{\sigma^3} Q_\sigma(x, \pi_\sigma(x)) + 
				2 \dpartialtfrac{}{\sigma } \pi_\sigma(x)^\top \dpartialtfrac{^2}{\sigma^2} \nabla_u Q_\sigma(x, \pi_\sigma(x))
				\\ 
				&\qquad +
				\dpartialtfrac{^2}{\sigma^2} \pi_\sigma(x)^\top \dpartialtfrac{}{\sigma }\nabla_u Q_\sigma(x, \pi_\sigma(x))
				\\ \nonumber
				&\qquad
				+
				\dpartialtfrac{}{\sigma } \pi_\sigma(x)^\top
				\nabla^2_u\dpartialtfrac{}{\sigma } Q_\sigma(x, \pi_\sigma(x)) \dpartialtfrac{}{\sigma } \pi_\sigma(x).
			\end{alignat}
		\end{subequations}
		When evaluating at zero, we again have  $\dpartialtfrac{}{\sigma } \pi_\sigma(x)\vert_{\sigma=0} = 0$ from Lemma~\ref{lem:DPoperator_pol_der}. Further, $\dpartialtfrac{}{\sigma } \nabla_u Q_\sigma(x,u) \vert_{\sigma=0} = 0$, cf. the proof of Lemma~\ref{lem:DPoperator_pol_der}.
		Thus only the first term remains, and \eqref{eq:val_func_deriv} for $i=3$ follows, concluding the proof.
	\end{proof}

	\begin{lemma}\label{lem:DPoperator_state_action_valfunc_der}
		Let Assumptions~\ref{ass:smoothfuncs} and  \ref{ass:noise} hold.
		Let $\bar V_\sigma$ be a smooth value function for which  $\dpartial{}{\sigma} \bar V_\sigma(x)\vert_{\sigma=0} = 0$ for all $x$.
		Denote by $Q_\sigma = T^{V\to Q}_\sigma[\bar V_\sigma]$ the resulting state-action value function, and by $\tilde V_\sigma(w; x, u)\defeq \bar V_\sigma(f(x, u, w))$ the corresponding stochastic cost-to-go.
		Then, for $i=2,3,$
		\begin{multline} \label{eq:state_action_val_func_deriv}
			\dpartialtfrac{^i}{\sigma^i }Q_{\sigma}(x, u)\big\vert_{\sigma=0}
			= \dpartialtfrac{^i}{\sigma^i} \bar V_\sigma(f(x,u,\sigma w)) \big\vert_{\sigma=0} 
			\\
			+ \EXV_w \{  \dpartialtfrac{^i}{w^i} \tilde V_\sigma(\sigma w; x,u) \big\vert_{\sigma=0} \bullet w^i \} .
		\end{multline}
	\end{lemma}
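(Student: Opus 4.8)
The plan is to reduce the $i$-th $\sigma$-derivative of $Q_\sigma$ to an expectation of an ordinary total $\sigma$-derivative of the integrand, and then to expand that total derivative by the binomial theorem, exploiting that $\sigma$ enters the integrand through two distinct channels: the subscript of $\bar V_\sigma$ and the scaling $\sigma w$ of the noise argument. First, writing $Q_\sigma(x,u) = L(x,u) + \EXV_w\{\tilde V_\sigma(\sigma w; x, u)\}$ and noting that $L$ is $\sigma$-independent, I would use the measure-theoretic Leibniz rule (justified, exactly as in the preceding lemmas, by the bounded noise support of Assumption~\ref{ass:noise} together with smoothness) to swap $\partial_\sigma^i$ and $\EXV_w$, obtaining $\partial_\sigma^i Q_\sigma(x,u) = \EXV_w\{\tfrac{\mathrm{d}^i}{\mathrm{d}\sigma^i}\tilde V_\sigma(\sigma w; x, u)\}$, where the inner derivative is now total in $\sigma$ at fixed $w$.

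Introducing a placeholder $s$ for the noise-argument slot of $\tilde V_\sigma$ (so that the statement's $\partial_w^i$ is $\partial_s^i$), the map $\sigma\mapsto\tilde V_\sigma(\sigma w)$ is the restriction of the two-argument function $(\sigma, s)\mapsto\tilde V_\sigma(s)$ to the line $s = \sigma w$. Hence the total derivative operator factors as $\tfrac{\mathrm{d}}{\mathrm{d}\sigma} = \partial_\sigma + w\cdot\partial_s$, and since mixed partials commute by smoothness (Assumption~\ref{ass:smoothfuncs}), the binomial theorem gives
$$\frac{\mathrm{d}^i}{\mathrm{d}\sigma^i}\tilde V_\sigma(\sigma w; x,u) = \sum_{j=0}^{i}\binom{i}{j}\,\partial_\sigma^{\,i-j}\partial_s^{\,j}\tilde V_\sigma(\sigma w; x,u)\bullet w^j.$$
Evaluating at $\sigma=0$ sends the argument $\sigma w$ to $0$, so every coefficient tensor $\partial_\sigma^{\,i-j}\partial_s^{\,j}\tilde V_\sigma(0;x,u)$ becomes deterministic, leaving
$$\partial_\sigma^{\,i} Q_\sigma(x,u)\big\vert_{\sigma=0} = \sum_{j=0}^{i}\binom{i}{j}\,\EXV_w\{\partial_\sigma^{\,i-j}\partial_s^{\,j}\tilde V_\sigma(0; x,u)\bullet w^j\}.$$

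It remains to match this sum to the two claimed terms and to show the remaining summands vanish for $i\in\{2,3\}$. The $j=0$ summand equals $\partial_\sigma^{\,i}\bar V_\sigma(f(x,u,\sigma w))\vert_{\sigma=0}$, the first term on the right-hand side, while the $j=i$ summand is $\EXV_w\{\partial_s^{\,i}\tilde V_\sigma(0;x,u)\bullet w^i\}$, the second term. For the intermediate indices $1\le j\le i-1$: the $j=1$ contribution carries $\EXV_w\{w\}=0$ and vanishes by the zero-mean part of Assumption~\ref{ass:noise}; the only other case occurring for $i\le 3$ is $j=2,\ i=3$, whose coefficient is $\partial_\sigma\partial_s^{\,2}\tilde V_\sigma(0;x,u)$. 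Here I would commute the partials (smoothness) to $\partial_s^{\,2}\partial_\sigma\tilde V_\sigma(0;x,u)$ and invoke the hypothesis: since $\partial_\sigma\tilde V_\sigma(s;x,u)\vert_{\sigma=0} = \partial_\sigma\bar V_\sigma(f(x,u,s))\vert_{\sigma=0}$ vanishes identically in $s$, so do all its $s$-derivatives, killing the coefficient. This is exactly where the hypothesis is used, and it also explains the restriction to $i\le 3$: for $i\ge 4$ a term such as $j=2,\ i=4$ would carry the coefficient $\partial_\sigma^{\,2}\bar V$, which the hypothesis does not control.

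The main obstacle is bookkeeping rather than analysis: correctly reconciling the partial-derivative convention of the statement (where the first term differentiates only the explicit subscript $\sigma$ while the argument is frozen, so that at $\sigma=0$ it matches the $j=0$ coefficient) with the total derivative produced by the chain rule, and tracking which of the two $\sigma$-channels each of the $i$ differentiations hits. The one genuinely non-mechanical step is the vanishing of the $j=2,\ i=3$ coefficient, which hinges on upgrading the pointwise hypothesis $\partial_\sigma\bar V_\sigma\vert_{\sigma=0}=0$ to the vanishing of its spatial derivatives, using the ``for all $x$'' quantifier together with commuting partials. The Leibniz swap and the operator commutation are otherwise routine given Assumptions~\ref{ass:smoothfuncs} and~\ref{ass:noise}.
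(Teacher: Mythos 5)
Your proposal is correct and takes essentially the same route as the paper's proof: swap $\dpartialtfrac{^i}{\sigma^i}$ with $\EXV_w$ via the measure-theoretic Leibniz rule, expand the total $\sigma$-derivative along the two channels (the paper writes out the $i=2,3$ terms explicitly where you invoke the binomial theorem), drop the linear-in-$w$ terms by the zero-mean assumption, and kill the remaining $i=3$, $j=2$ coefficient by differentiating the hypothesis $\dpartialtfrac{}{\sigma}\tilde V_\sigma(\cdot)\vert_{\sigma=0}\equiv 0$ in the spatial argument. The only difference is notational bookkeeping, not substance.
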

	\begin{proof}
	We have 
		$
		Q_\sigma(x,u) = L(x,u) + \EXV_w \{ \tilde V_\sigma(\sigma w; x, u)  \}
		$
	such that 
	$
		\dpartialtfrac{^i}{\sigma^i} Q_{\sigma}(x, u) = \EXV_w \{  \dtotaltfrac{^i}{\sigma^i} \tilde V_\sigma(\sigma w; x, u) \},
	$
	for $i=1,2,3$.
	Dropping the dependence on $x,u$ for ease of notation, the total derivatives are
	\begin{subequations}
		\begin{align}
			\dtotaltfrac{^2}{\sigma^2} \tilde V_\sigma(\sigma w)
			&= 
			\dpartialtfrac{^2}{\sigma^2} \tilde V_\sigma(\sigma w)
			+ 2 \dpartialtfrac{}{\sigma} \nabla \tilde V_\sigma(\sigma w)^\top w
			\\&\qquad\qquad\qquad\qquad \nonumber
			+ w ^\top \nabla^2 \tilde V_\sigma(\sigma w) w,
			\\
			\dtotaltfrac{^3}{\sigma^3} \tilde V_\sigma(\sigma w)
			&= 
			\dpartialtfrac{^3}{\sigma^3} \tilde V_\sigma(\sigma w)
			+ 3 \dpartialtfrac{^2}{\sigma^2} \nabla \tilde V_\sigma(\sigma w)^\top w
			\\&\quad \nonumber
			+ 3w^\top \dpartialtfrac{}{\sigma} \nabla^2 \tilde V_\sigma(\sigma w) w
			+ \dpartialtfrac{^3}{w^3} \tilde V_\sigma(\sigma w) \bullet w^3.
		\end{align}
	\end{subequations}
	For evaluation at $\sigma=0$ we first note that  $\dpartial{}{\sigma}\tilde V_\sigma(w)\vert_{\sigma=0} = 0$ for all $w$ by assumption on $\dpartial{}{\sigma} \bar V$.
	In consequence, also $\dpartialtfrac{}{\sigma} \nabla^2 \tilde V_\sigma( w)\vert_{\sigma=0} = 0$.
	When taking the expectation with respect to $w$, all terms that are linear in $w$ after evaluation at $\sigma = 0$ drop due to the zero mean assumption.
	\end{proof}

Based on the previous results, we can now show that the optimal state value function and the state value functions resulting from the evaluation of the CEC policy only differ as $\bigO(\sigma^4)$.

\begin{lemma}\label{lem:taylor_expansions_finhorz}
Let Assumptions~\ref{ass:smoothfuncs} to \ref{ass:regularity_finhorz} hold.
The Taylor expansions in $\sigma$ at $\sigma=0$ of the optimal value function $V^\star_{\sigma,k}(x)$ and the CEC value function $V_{\sigma,k}^{\ce}(x)$, are identical up to including third order, such that, for $k=0,\dots,N-1$,
\begin{equation} \label{eq:Vec_is_Vstar_plus_O4}
	V_{\sigma,k}^\ce(x) = V_{\sigma,k}^{\star}(x) + \bigO(\sigma^4).
\end{equation}
\end{lemma}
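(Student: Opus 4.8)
The plan is to prove \eqref{eq:Vec_is_Vstar_plus_O4} by backward induction on $k$, showing at each stage that the $\sigma$-derivatives of $V^\star_{\sigma,k}$ and $V^{\ce}_{\sigma,k}$ at $\sigma=0$ coincide up to order three. Since Lemma~\ref{lem:DPeval_op_smooth} guarantees both functions are smooth in $\sigma$, matching the Taylor coefficients through third order is exactly the assertion that they differ by $\bigO(\sigma^4)$. The backward recursion is the natural vehicle, because the two value functions are built from the same pieces applied to the terminal cost $E$: the optimal one via the full operator $V^\star_{\sigma,k} = T_\sigma[V^\star_{\sigma,k+1}]$, and the CEC one via policy evaluation $V^{\ce}_{\sigma,k} = \tilde T_\sigma[V^{\ce}_{\sigma,k+1}, \pi^{\ce}_k]$ with the fixed nominal policy $\pi^{\ce}_k = \pi^\star_{0,k}$.

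Before the induction I would record a preliminary fact needed to invoke the earlier lemmata: both families satisfy $\dpartialtfrac{}{\sigma}V_{\sigma,k}(x)\vert_{\sigma=0}=0$ for every $k$. For the optimal family this is Lemma~\ref{lem:DPoperator_preservation} propagated backward from $E$, for which the property is trivial. For the CEC family the same follows from the computation \eqref{eq:first_der_Qfunc}: since $\pi^{\ce}_k$ is $\sigma$-independent, $\dpartialtfrac{}{\sigma}V^{\ce}_{\sigma,k}(x)\vert_{\sigma=0} = \dpartialtfrac{}{\sigma}Q^{\ce}_{\sigma,k}(x,\pi^{\ce}_k(x))\vert_{\sigma=0} = 0$, again propagated backward from $E$. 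This makes the hypotheses of Lemmata~\ref{lem:DPoperator_Valfunc_der} and \ref{lem:DPoperator_state_action_valfunc_der} available at each stage for both families.

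The inductive step assumes $\dpartialtfrac{^i}{\sigma^i}V^\star_{\sigma,k+1}\vert_{\sigma=0} = \dpartialtfrac{^i}{\sigma^i}V^{\ce}_{\sigma,k+1}\vert_{\sigma=0}$ for $i=0,1,2,3$ and derives the same at stage $k$; the base case $k=N$ is immediate since $V^\star_{\sigma,N}=V^{\ce}_{\sigma,N}=E$. The order $i=0$ case reduces to $V^\star_{0,k}=V^{\ce}_{0,k}$: at $\sigma=0$ the noise term vanishes, the two state-action value functions agree by the hypothesis, and because $\pi^{\ce}_k$ is by definition the nominal minimizer $\pi^\star_{0,k}$, the evaluation reproduces the nominal optimum. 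The order $i=1$ case is the preliminary fact (both derivatives are zero). For $i=2,3$ I would first apply Lemma~\ref{lem:DPoperator_Valfunc_der} to the optimal family, so that $\dpartialtfrac{^i}{\sigma^i}V^\star_{\sigma,k}\vert_{\sigma=0}$ equals the partial $\sigma$-derivative of $Q^\star_{\sigma,k}$ frozen at $u=\pi^\star_{0,k}(x)$ -- the policy's own $\sigma$-dependence contributes nothing because $\dpartialtfrac{}{\sigma}\pi^\star_{\sigma,k}\vert_{\sigma=0}=0$ (Lemma~\ref{lem:DPoperator_pol_der}) and $\nabla_u Q^\star = 0$ at the minimizer. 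On the CEC side the analogous identity is immediate since $\pi^{\ce}_k$ is $\sigma$-independent, and it is frozen at the same point $\pi^{\ce}_k(x)=\pi^\star_{0,k}(x)$. It then suffices to match the partial $\sigma$-derivatives of the two state-action value functions at that common $u$, which I would do via Lemma~\ref{lem:DPoperator_state_action_valfunc_der}: its first (pure-parameter) term depends only on $\dpartialtfrac{^i}{\sigma^i}V_{\sigma,k+1}\vert_{\sigma=0}$, and its second (noise-argument) term depends only on the spatial derivatives of $V_{0,k+1}\circ f$. Both are equal for the two families by the induction hypothesis (orders $i$ and $0$ respectively), closing the step.

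The main obstacle is the bookkeeping in the $i=2,3$ case, specifically keeping rigorous track of which occurrences of $\sigma$ are differentiated. The delicate point is that the optimal policy $\pi^\star_{\sigma,k}$ genuinely varies with $\sigma$ whereas the CEC policy is frozen; the whole result hinges on the two facts that these policies coincide at $\sigma=0$ (by the definition $\pi^{\ce}_k=\pi^\star_{0,k}$) and that the leading correction to the optimal policy is $\bigO(\sigma^2)$, so that its influence on the value function is pushed beyond third order. Verifying that the cross terms involving $\dpartialtfrac{}{\sigma}\pi^\star_{\sigma,k}$ and $\nabla_u Q^\star$ all vanish at $\sigma=0$ -- already carried out in Lemmata~\ref{lem:DPoperator_pol_der} and \ref{lem:DPoperator_Valfunc_der} -- is what makes the two expansions collapse onto the same $Q$-derivatives, and hence must be cited carefully rather than recomputed.
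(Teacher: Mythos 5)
Your proposal is correct and follows essentially the same route as the paper's own proof: a backward induction that matches the $\sigma$-derivatives at $\sigma=0$ through third order, using Lemma~\ref{lem:DPoperator_preservation} for the first-order terms, Lemma~\ref{lem:DPoperator_Valfunc_der} to reduce the optimal value function's derivatives to partial derivatives of $Q^\star$ frozen at $\pi^\star_{0,k}(x)$, and the two-term decomposition of Lemma~\ref{lem:DPoperator_state_action_valfunc_der} to close the inductive step. If anything, your treatment of the CEC family's first derivative (via \eqref{eq:first_der_Qfunc} and $\sigma$-independence of $\pi^\ce_k$, rather than citing Lemma~\ref{lem:DPoperator_preservation}, which is stated for the optimizing operator $T_\sigma$) is slightly more careful than the paper's.
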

\begin{proof}
We will show that from zeroth up to third order, the derivatives with respect to $\sigma$ are identical at $\sigma=0$.
For the zeroth-order term, $V^\ce_{\sigma,k}(x)\vert_{\sigma=0} = V^\star_{\sigma,k}(x)\vert_{\sigma=0}$ follows directly from the definition of the certainty-equivalent policy.

For the first-order derivative we first note that the derivative of the terminal cost with respect to $\sigma$ is trivially given by zero since it does not depend on $\sigma$.
By Lemma~\ref{lem:DPoperator_preservation}, the DP operator preserves this property, such that $\dpartialtfrac{}{\sigma} V^\ce_{\sigma,k}(x)\vert_{\sigma=0} = \dpartialtfrac{}{\sigma} V^\star_{\sigma,k}(x)\vert_{\sigma=0} = 0$, for $k=N-1,\dots,0$.
In consequence, Lemmata~\ref{lem:DPoperator_pol_der} to \ref{lem:DPoperator_state_action_valfunc_der} apply to each of the value functions.

The second- and third-order derivatives of $V_{\sigma,k}^{\ce}(x) = Q_{\sigma,k}^{\ce}(x, \pi^\ce_k(x))$ are given by the partial derivatives of $Q_{\sigma,k}^{\ce}$, since $\pi^\ce_k(x)$ does not depend on $\sigma$.
From Lemma~\ref{lem:DPoperator_Valfunc_der} we have that the second- and third-order derivatives of $V^\star_{\sigma,k}(x)$ are also given from the partial derivatives of the corresponding state-action value function $Q_{\sigma,k}^\star(x, u)$.

These partial derivatives are given from \eqref{eq:state_action_val_func_deriv} with $\bar V_\sigma(x) = V_{\sigma,k+1}^{\ce}(x)$ resp. $\bar V_\sigma(x) =V^\star_{\sigma,k+1}(x)$, and correspondingly defined $\tilde V_{\sigma,k+1}^{\ce}(w)$, $\tilde V_{\sigma,k+1}^\star(w)$,  for $k=N-1,\dots,0$.
From $V^\ce_{0,k+1}(x) = V^\star_{0,k+1}(x)$ 
follows 
$\tilde V^\ce_{0,k+1}(w) = \tilde V^\star_{0,k+1}(w)$ 
such that also
$\dpartialtfrac{^i}{w^i} \tilde V^\ce_{\sigma,k+1}(w)\vert_{\sigma=0} = \dpartialtfrac{^i}{w^i}  \tilde V^\star_{\sigma,k+1}(w)\vert_{\sigma=0}$ for $i=1,2,3.$
This establishes identity of the second term in \eqref{eq:state_action_val_func_deriv}, when comparing the derivatives of $Q_{\sigma,k}^\ce(x, u)$ and $Q_{\sigma,k}^\star(x, u)$.
The first term in \eqref{eq:state_action_val_func_deriv} is identical if 
\begin{equation} \label{eq:identity_dv_dsig_star_cec}
	\dpartialtfrac{^i}{\sigma^i}  V_{\sigma,k+1}^{\ce}(x) \big\vert_{\sigma=0} = \dpartialtfrac{^i}{\sigma^i}  V_{\sigma,k+1}^{\star}(x) \big\vert_{\sigma=0}, \quad i=1,2.
\end{equation}
For $k=N-1$ this trivially holds due to $V_{\sigma,N}^{\ce}(x) = V_{\sigma,N}^{\star}(x) = E(x)$.
In consequence, for $k=N-1$,
\begin{equation} \label{eq:identity_dv_dsig_star_cec_2}
	\begin{multlined}
			\dpartialtfrac{^i}{\sigma^i}  V_{\sigma,k}^{\ce}(x) \big\vert_{\sigma=0}
			=
			\dpartialtfrac{^i}{\sigma^i}  Q_{\sigma,k}^{\ce}(x, \pi^\ce_{k}(x)) \big\vert_{\sigma=0}
			\\
			=
			\dpartialtfrac{^i}{\sigma^i}  Q_{\sigma,k}^{\star}(x, \pi^\star_{\sigma, k}(x)) \big\vert_{\sigma=0}
			=
			\dpartialtfrac{^i}{\sigma^i}  V_{\sigma,k}^{\star}(x) \big\vert_{\sigma=0},
	\end{multlined}
\end{equation}
for $i=2,3.$
This establishes \eqref{eq:identity_dv_dsig_star_cec} also for $k=N-2$.
Repeating this reasoning throughout the DP recursion, we see that \eqref{eq:identity_dv_dsig_star_cec_2} holds also for $k=N-2, \dots, 0$.
Having established that the derivatives with respect to $\sigma$ are identical at $\sigma=0$ up to third order, \eqref{eq:Vec_is_Vstar_plus_O4} immediately follows.
\end{proof}

\begin{theorem} \label{thm:suboptimality_finhorz}
	Let Assumptions~\ref{ass:smoothfuncs} to \ref{ass:regularity_finhorz} hold.
	The suboptimality \eqref{eq:subopt_finhorz} of applying the nominally optimal policy $\poltraj^\ce$ to the stochastic process with uncertainty level $\sigma$ is of fourth order with respect to the level of uncertainty,
	$\Delta V_\sigma(x) = \bigO(\sigma^4)$.
	The difference between the controls is of second order, 
	$\lVert \pi^\ce(x) - \pi_\sigma^\star(x) \rVert = \bigO(\sigma^2)$.
\end{theorem}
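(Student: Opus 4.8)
The plan is to treat the two claims separately, in both cases harvesting the Taylor-expansion machinery assembled in the preceding lemmata; given that groundwork, the theorem is a short consequence rather than a fresh computation.

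For the value-function suboptimality, the result is essentially immediate from Lemma~\ref{lem:taylor_expansions_finhorz}. By definition $\Delta V_\sigma(x) = V^\ce_{\sigma,0}(x) - V^\star_{\sigma,0}(x)$, and setting $k=0$ in \eqref{eq:Vec_is_Vstar_plus_O4} gives $V^\ce_{\sigma,0}(x) = V^\star_{\sigma,0}(x) + \bigO(\sigma^4)$, whence $\Delta V_\sigma(x) = \bigO(\sigma^4)$. The only thing to spell out is why the derivative statement of that lemma licenses the $\bigO$ notation: since $\sigma \mapsto \Delta V_\sigma(x)$ is smooth on $[-\bar\sigma,\bar\sigma]$ by Lemma~\ref{lem:DPeval_op_smooth}, and its derivatives with respect to $\sigma$ up to and including third order vanish at $\sigma=0$, Taylor's theorem with Lagrange remainder bounds $\lvert \Delta V_\sigma(x)\rvert$ by $C(x)\,\sigma^4$ for $\sigma$ in a neighborhood of zero.

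For the control difference, I would Taylor-expand the optimal time-$0$ policy $\pi^\star_{\sigma,0}(x) = T_\sigma^{V\to\pi}[V^\star_{\sigma,1}](x)$ in $\sigma$ about $\sigma=0$. The zeroth-order term matches the certainty-equivalent control by definition, since $\pi^\ce(x) = \pi^\star_{0,0}(x)$. The key step is that the first-order term vanishes: I would invoke Lemma~\ref{lem:DPoperator_pol_der} with $\bar V_\sigma = V^\star_{\sigma,1}$. This lemma applies precisely because its hypothesis $\dpartialtfrac{}{\sigma} V^\star_{\sigma,1}(x)\vert_{\sigma=0} = 0$ has already been established, via Lemma~\ref{lem:DPoperator_preservation}, in the proof of Lemma~\ref{lem:taylor_expansions_finhorz} (it is the $k=1$ instance there, and holds trivially when $N=1$ since then $V^\star_{\sigma,1}=E$). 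Lemma~\ref{lem:DPoperator_pol_der} then yields $\dpartialtfrac{}{\sigma} \pi^\star_{\sigma,0}(x)\vert_{\sigma=0}=0$. Combining the vanishing zeroth-order discrepancy with the vanishing first-order derivative, and again using smoothness of the policy in $\sigma$ (Lemma~\ref{lem:DPeval_op_smooth}) to control the Taylor remainder, gives $\pi^\star_{\sigma,0}(x) = \pi^\ce(x) + \bigO(\sigma^2)$, i.e. $\lVert \pi^\ce(x) - \pi^\star_\sigma(x)\rVert = \bigO(\sigma^2)$.

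Given the lemmata, neither part presents a genuine obstacle; the substantive work has already been done in showing that the first-order $\sigma$-derivatives of the value functions and of the induced policies vanish at $\sigma=0$ and that this property propagates backward through the DP recursion. The one point requiring care is purely the passage from the derivative identities to the asymptotic bounds: I must ensure that for each fixed $x$ the maps $\sigma \mapsto \Delta V_\sigma(x)$ and $\sigma \mapsto \pi^\star_{\sigma,0}(x)$ are sufficiently many times continuously differentiable on the closed interval $[-\bar\sigma,\bar\sigma]$ so that Taylor's theorem supplies the remainder bound. This regularity is exactly what the smoothness-preservation Lemma~\ref{lem:DPeval_op_smooth} guarantees, so the proof reduces to stitching these observations together.
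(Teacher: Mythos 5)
Your proposal is correct and follows essentially the same route as the paper: the value-function bound is read off from Lemma~\ref{lem:taylor_expansions_finhorz} at $k=0$, and the control bound comes from a second-order Taylor expansion using $\dpartialtfrac{}{\sigma}\pi^\star_\sigma(x)\vert_{\sigma=0}=0$ (Lemma~\ref{lem:DPoperator_pol_der}) together with the fact that $\pi^\ce$ does not depend on $\sigma$. Your additional care in verifying the hypothesis of Lemma~\ref{lem:DPoperator_pol_der} for $\bar V_\sigma = V^\star_{\sigma,1}$ and in justifying the passage from vanishing derivatives to the $\bigO$ bounds via smoothness (Lemma~\ref{lem:DPeval_op_smooth}) and Taylor's theorem only makes explicit what the paper leaves implicit.
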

\begin{proof}
	From Lemma~\ref{lem:taylor_expansions_finhorz} we immediately have $\Delta V_\sigma(x) = V_{0,\sigma}^{\ce}(x) - V_{0,\sigma}^\star(x) = \bigO(\sigma^4)$.
	The statement on the controls follows from a second-order Taylor expansion and noting that
	$\dpartialtfrac{}{\sigma } \pi^\star_\sigma(x) \vert_{\sigma=0} = 0$ (Lemma~\ref{lem:DPoperator_pol_der}) and $\dpartialtfrac{}{\sigma} \pi^\ce(x) = 0$ (trivially).
	\end{proof}

\section{Numerical Illustration} \label{sec:num}

We will now illustrate the results with a simple example, which allows us to evaluate the value functions and policies up to numerical precision.
This example is implemented via the Python interface of CasADi \cite{Andersson2019} with IPOPT \cite{Waechter2006} as solver.
The code is publicly available at \url{www.github.com/fmesserer/suboptimality-nominal-mpc}.

Consider the scalar state $x \in \R$, control $u\in \R$ and disturbance $w\in\R$.
The continuous time dynamics, over the time interval $[0,T]$, are given by
	$
	\dot x = x + x^3 + u
	$
from which we obtain the discrete time dynamics 
\begin{equation} \label{eq:example_dyn}
	x_{k+1} = f_\mathrm{RK4}(x_k, u_k) +  \sigma w_k, \quad k=0,\dots,N-1,
\end{equation}
by numerical integration with one step of the Runge-Kutta method of fourth order, where the controls $u_k$ are piecewise constant over the time step $h=T/N$.
The disturbance $w_k$ follows a discrete distribution and takes values from the set $W = \{-1, 1\}$ with probability $p=\tfrac{1}{2}$ for each value.

\begin{figure}
	\vspace{5pt}
	\centering
	\includegraphics[width=.9\columnwidth]{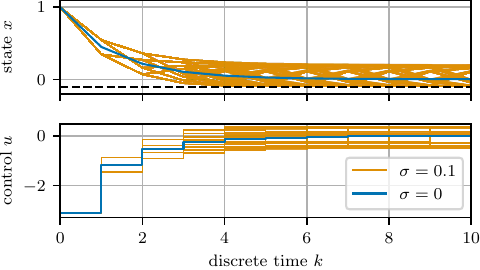}
	\caption{Two solutions of the tree OCP \eqref{eq:treeocp} with different levels of uncertainty for the initial value $x=1$.}
	\label{fig:solution_treeocp}
\end{figure}

The control goal is to stabilize the system near the origin while keeping the state above a lower bound, $x \geq x_\mathrm{lb}$.
This is expressed by the stage cost
\begin{equation} \label{eq:example_stagecost}
	L(x, u) = q x^2 + r u ^2 + \rho \phi_\varepsilon(x - x_\mathrm{lb}),
\end{equation}
which is visualized in Fig.~\ref{fig:stage_cost}.
Here, the first two terms are standard quadratic costs. The third term, with
	$
	\phi_\varepsilon(x) \defeq \tfrac{1}{2}\sqrt{x^2 +  \varepsilon^2 } - \tfrac{1}{2} x,
	$
is a smoothed overapproximation of the exact penalty \sloppy $\max(0, -x)$, with smoothing parameter $\varepsilon$ and penalty weight $\rho$.
The terminal cost is 
	$
	E(x) = \tilde q x^2 + \rho \phi_\varepsilon(x-x_\mathrm{lb}),
	$
where $\tilde q$ is chosen as the solution to the
algebraic Riccati equation corresponding to the infinite horizon LQR problem obtained by linearizing the system at the origin and with cost matrices $q$ and $r$.
The parameter values are $T=2$, $N=10$, $x_\mathrm{lb}=-0.1$, $q=5$, $r=1$, $\rho = 10$, $\varepsilon = 10^{-2}$.

Due to the discrete distribution of the process noise, the set of possible trajectories can be described by a scenario tree.
Starting from the root $\bar x_0$, the number of scenarios is multiplied by $m=\lvert \calW\rvert$ with every time step.
We denote the possible values of the state at time $k$ by $x_k^i$, $i=1,\dots,m^k$, $k=0,\dots,N$.
Associating a control $u_k^i$ with every node implicitly encodes a policy, as the control value depends on the realized state.
The resulting stochastic OCP corresponds to \eqref{eq:stochOCP} and takes the form of a tree-structured OCP,
\begin{mini!}
	{\xtree, \utree}
	{ \sum_{k=0}^{N-1}  \left( p^k  \sum_{i=1}^{m^k} L(x_k^i, u_k^i) \right) + p^N \sum_{i=1}^{m^N} E(x_N^i)      }
	{\label{eq:treeocp}}
	{}
	\addConstraint{x_0^0}{= x}
	\addConstraint{x_{k+1}^i }{= f(x_k^{\lceil i / m^k \rceil}, u_k^{\lceil i / m^k \rceil}, \sigma w^{\left.i\right]_1^m}) \label{eq:tree_dyn} }
	\addConstraint{}{\qquad i=1,\dots, m^{k+1}, k=0,\dots, N-1, \nonumber}
\end{mini!}
where $\lceil \cdot \rceil$ denotes the ceiling function and $\left.i\right]_1^m$
wraps the integer~$i$ to the set $\{1, \dots, m\}$.
Thus, for each $k=0,\dots,N-1$, the dynamics constraint \eqref{eq:tree_dyn} cycles through all scenarios of the current stage, $(x_k^i, u_k^i)$, $i=1,\dots,m^k$, and simulates it forward once for every possible disturbance value, $w^i \in \calW$.
The resulting scenario trees are collected in $\xtree$ resp. $\utree$. 
For the cost contribution, the tree nodes are summed up within each stage and weighted by their respective probability.
Two solutions of \eqref{eq:treeocp} are visualized in Fig.~\ref{fig:solution_treeocp}.
The nominal OCP with $\sigma = 0$ can equivalently be written as the degenerate tree OCP resulting from the singleton disturbance set $\calW = \{0\}$.

We compute the suboptimality of CEC for varying values of $x$ and $\sigma$.
The optimal value function $V^\star_\sigma(x)$ is given by the optimal value of \eqref{eq:treeocp}.
The value function of CEC results from simulating the system \eqref{eq:example_dyn} for every possible value of $(w_0,\dots,w_{N-1})$. At each time $k$ and for each possible state $x_k^i$, the corresponding control $u_k^i$ is computed by solving the nominal OCP over a reduced horizon $\tilde N = N - k$.
The costs are then summed up as in the objective of \eqref{eq:treeocp}.

Fig.~\ref{fig:stage_cost} shows the value functions for several values of $\sigma$, whereas Fig.~\ref{fig:subopt_and_delta_u0} visualizes the suboptimality and difference in policy as a function of $\sigma$ for several values of $x$.
We see that for small values of $\sigma$ the suboptimality is $\bigO(\sigma^4)$ and the difference in policy is $\bigO(\sigma^2)$, as predicted by the theory.
At roughly $\sigma \approx 0.1$, this relationship ceases to hold for  larger values of $x$.
Even though we have only enforced the bound $x\geq x_\mathrm{lb}$ via a penalty, we can think about this event roughly as this constraint becoming active.
With respect to the stage cost, this corresponds to the strongly nonlinear region around $x\approx x_\mathrm{lb}$ influencing the solution, cf. Fig.~\ref{fig:stage_cost}.
Strong nonlinearity here means that the higher-order terms of the Taylor expansions are significant and start to influence the solution already at relatively small uncertainty levels.

\begin{figure}
	\vspace{5pt}
	\centering
	\includegraphics[width=\columnwidth]{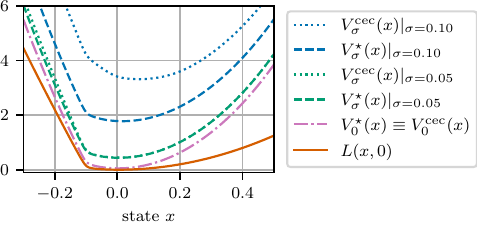}
	\caption{The value function $V_\sigma^\ce$ of CEC and the optimal value function $V_\sigma^\star$ for three values of $\sigma$,  as well as the stage cost $L$ with respect to the state~$x$.
	For $\sigma=0.05$, the two value functions are barely distinguishable.
	}
	\label{fig:stage_cost}
\end{figure}

\begin{figure}
	\vspace{5pt}
	\centering
	\includegraphics[width=\columnwidth]{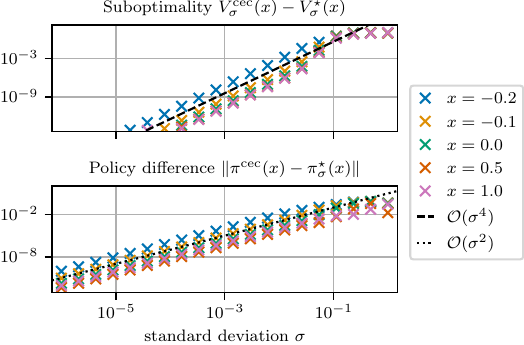}
	\caption{
		Suboptimality of CEC and difference of the control inputs as a function of $\sigma$  and for several values of $x$.
		For $x=0.5$ and $x=1.0$ the suboptimality is almost identical.
	}
	\label{fig:subopt_and_delta_u0}
\end{figure}

\section{Conclusions} \label{sec:conc}
We have shown that in a smooth and unconstrained setting the suboptimality of CEC grows only with fourth order as the level of uncertainty increases.
This suggests that uncertainty-aware MPC schemes are able to significantly outperform nominal MPC only in the presence of large disturbances or constraints.



\bibliographystyle{IEEEtran}
\bibliography{syscop}




\end{document}